\documentclass[a4paper,twoside,12pt]{article}

%
%
%

\usepackage[all]{xy}
\usepackage{xspace}
\usepackage{amsthm,tikz,hyperref,color,amsmath,graphicx,amsfonts,gensymb,mathdots,geometry,amssymb,tensor,enumitem,wasysym,makeidx,stmaryrd}
\usepackage[utf8]{inputenc}
\usepackage[T1]{fontenc}
\usepackage[english]{babel}
\usetikzlibrary{cd,arrows}

\def\Trace{ \operatorname{Trace}}

\def\F{\mathbb F}
\def\Fqbar{\overline{\F}_q}
\def\Ql{\overline{\mathbb Q}_{\ell}}

\def\G{\mathbf G}
\def\T{\mathbf T}

\def\U{\mathbf U}
\def\P{\mathbf P}

\def\L{\mathbf L}
\def\M{\mathbf M}

\def\Y{\mathbf Y}

\def\Go{\mathbf G^{\circ}}
\def\To{\mathbf T^{\circ}}

\def\Po{\mathbf P^{\circ}}
\def\Qo{\mathbf Q^{\circ}}
\def\Lo{\mathbf L^{\circ}}
\def\Mo{\mathbf M^{\circ}}

\def\GoF{\mathbf G^{\circ F}}

\def\LoF{\mathbf L^{\circ F}}
\def\MoF{\mathbf M^{\circ F}}



\def\Res{\operatorname{Res}}
\def\Ind{\operatorname{Ind}}

\def\ad{\operatorname{ad}}

\newtheoremstyle{tdp}{}{}{\itshape}{}{\bfseries}{:}{.5em}{}
\newtheoremstyle{ddp}{}{}{}{}{\bfseries}{:}{.5em}{}
\newtheoremstyle{not}{}{}{}{}{\itshape}{:}{.5em}{}

\theoremstyle{tdp}
\newtheorem{definition}{Definition}[section]
\newtheorem{theorem}[definition]{Theorem}
\newtheorem{prop}[definition]{Proposition}
\newtheorem{corollary}[definition]{Corollary}
\newtheorem{lemma}[definition]{Lemma}

\theoremstyle{ddp}

\theoremstyle{not}
\newtheorem*{remark}{Remark}

\title{Mackey formula for disconnected reductive groups.}

\author{Sergio Cía}


\begin{document}
 
\maketitle

\begin{abstract}
We prove a Mackey formula for representations of finite groups of Lie type, in the case where the groups come from disconnected reductive groups.
\end{abstract}

\section{Introduction.}

The Deligne-Lusztig theory is an important tool for the study of representations of finite reductive groups, based on the underlying geometry of the algebraic groups from where these groups come. It  
let us, for example, organize and assemble the irreducible representations of these groups, and calculate some of the characters of these representations. 

This theory relies heavily on the fact that the algebraic groups are connected, and one may be interested in having a similar theory for disconnected groups. 
Digne and Michel succeeded in adapting the Deligne-Lusztig theory to disconnected reductive groups in \cite{DiMi}, 
and they showed that it behaves as well as the theory for connected groups.

The example in which we will focus on is the Mackey formula for the Deligne-Lusztig maps: let $q$ be a power of a prime number, let $\Go$ be a connected reductive group over $\Fqbar$ and let $F$ be a Frobenius endomorphism endowing $\Go$ with an $\F_q$-structure. Let $\Lo$, $\Mo$ be $F$-stable Levi subgroups of $\Go$. Then, if $R_{\Mo}^{\Go}$ is the Lusztig induction from $\Mo$ to $\Go$ and $\tensor*[^*]{}{}R_{\Lo}^{\Go}$ is the Lusztig restriction from $\Go$ to $\Lo$, then, under some mild conditions (see \cite[theorem 3.9]{BonMi}), we have 
\[
\tensor*[^*]{}{}R_{\Lo}^{\Go}\circ R_{\Mo}^{\Go}=\sum_{x\in \LoF\backslash \mathcal S_{\Go}(\Lo,\Mo)^F/ \MoF} R_{\Lo\cap\tensor*[^x]{}{}\Mo}^{\Lo} \circ \tensor*[^*]{}{}R_{\Lo\cap\tensor*[^x]{}{}\Mo}^{\tensor*[^x]{}{}\Mo} \circ \ad(x),
\]
where $\mathcal S_{\Go}(\Lo,\Mo)$ is the set of $x\in\Go$ such that $\Lo\cap\tensor*[^x]{}{}\Mo$ contains a maximal torus of $\Go$.

The purpose of this paper is to generalize this formula to a possibly disconnected reductive group $\G$, having $\Go$ as identity component:

\begin{theorem}
Let us assume that $\G/\Go$ consists on semisimple elements and that it has "enough normal subgroups" (see Corollary \ref{main_corollary}). 
Let $\L$ and $\M$ be Levi subgroups of $\G$ (in a sense to be precised in \textsection 2). Then the Mackey formula holds under the same conditions as in \cite[theorem 3.9]{BonMi}
\end{theorem}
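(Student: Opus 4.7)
The plan is to reduce the disconnected Mackey formula to the connected case of Bonnaf\'e--Michel \cite{BonMi}, by induction along a chain of normal subgroups interpolating between $\Go$ and $\G$.

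I would first recall from the Digne--Michel framework that, for a parabolic subgroup $\P\supset\L$ of $\G$ with unipotent radical $\U$, the Lusztig functors $R_\L^\G$ and $\tensor*[^*]{}{}R_\L^\G$ are built from the $\ell$-adic cohomology of a Deligne--Lusztig-type variety $\Y_\U$ carrying a $\G^F\times \L^F$-action. Then, as in the connected setting, the composition $\tensor*[^*]{}{}R_\L^\G\circ R_\M^\G$ is realised as the alternating sum of cohomology groups of the fibre product $\Y_\U\times_{\G^F}\Y_\V$, where $\V$ is the unipotent radical of a parabolic $\Q\supset\M$, now equipped with an $\L^F\times\M^F$-action. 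This translates the problem to a purely geometric question about the fibre product.

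The next step is to stratify $\Y_\U\times_{\G^F}\Y_\V$ along $\G^F$-orbits, with strata expected to be indexed by $\L^F\backslash \mathcal S_\G(\L,\M)^F/\M^F$, the natural disconnected analogue of the Weyl-group double cosets that parametrise the Bruhat decomposition. On each stratum, up to taking free quotients, the variety should split as a product of Deligne--Lusztig varieties for $\L\cap\tensor*[^x]{}{}\M$ viewed as a Levi of both $\L$ and $\tensor*[^x]{}{}\M$, so that its cohomology contributes exactly $R_{\L\cap\tensor*[^x]{}{}\M}^{\L}\circ \tensor*[^*]{}{}R_{\L\cap\tensor*[^x]{}{}\M}^{\tensor*[^x]{}{}\M}\circ\ad(x)$ to the right-hand side. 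For the strata whose representative $x$ lies in $\Go$, the identification reduces to the connected Mackey formula of \cite[theorem 3.9]{BonMi}, which is why the same characteristic hypotheses are required.

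The main difficulty---and where the hypotheses of the theorem enter---is propagating the formula from $\Go$ to $\G$. The assumption that $\G/\Go$ consists of semisimple elements guarantees that every coset contains an element normalising a maximal torus, which is what makes the component structure compatible with the geometric stratification above. The ``enough normal subgroups'' hypothesis would then let me pick a filtration $\Go=\G_0\triangleleft\G_1\triangleleft\cdots\triangleleft\G_n=\G$ of $F$-stable normal subgroups with well-behaved (say cyclic) successive quotients, and run an induction: assuming the Mackey formula at level $\G_i$, a Clifford-theoretic argument would combine it with ordinary induction/restriction for the finite extension $\G_{i+1}^F/\G_i^F$ to obtain it at level $\G_{i+1}$. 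The most delicate point, which I expect to be the main obstacle, is verifying that $\mathcal S_{\G_{i+1}}(\L\cap\G_{i+1},\M\cap\G_{i+1})^F$ decomposes compatibly with this filtration, so that no ``cross-component'' contribution is lost at the inductive step and the formula closes exactly on the parameter set $\L^F\backslash \mathcal S_\G(\L,\M)^F/\M^F$.
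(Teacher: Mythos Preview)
Your approach diverges substantially from the paper's, and has two concrete gaps.

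First, you misread ``enough normal subgroups''. The paper does not use a filtration $\Go=\G_0\triangleleft\cdots\triangleleft\G_n=\G$ with Clifford theory along the steps. Rather, for each $x\in\mathcal S_\G(\L,\M)$ one forms $\G_x=\pi^{-1}\bigl(\pi(\L\cap\tensor*[^x]{}{}\M)\bigr)$ where $\pi:\G\to\G/\Go$, and the hypothesis is that every such $\G_x$ be normal in $\G$. This is needed simply to \emph{define} the right-hand side: $\L\cap\tensor*[^x]{}{}\M$ is a Levi only of $\G_x$, not of $\G$ (Proposition~\ref{LxM_Levi}), and one sets $R_{\L\cap\tensor*[^x]{}{}\M}^{\L}:=\Ind_{(\L\cap\G_x)^F}^{\L^F}\circ R_{\L\cap\tensor*[^x]{}{}\M}^{\L\cap\G_x}$, which requires $\L\cap\G_x\trianglelefteq\L$. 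Second, the geometric stratification of $\Y_\U\times_{\G^F}\Y_\V$ does not by itself yield Mackey even in the connected case: identifying each stratum's cohomology with the corresponding $R\circ\tensor*[^*]{}{}R$ term is precisely the hard and in general open step, and \cite{BonMi} is not proved that way. So you cannot invoke their theorem ``for the strata with $x\in\Go$'' inside a geometric argument and patch the remaining components by a separate inductive mechanism; the two pieces do not interface the way your outline suggests.

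The paper's actual proof is entirely character-theoretic, following Bonnaf\'e. For each semisimple $s\in\L^F$ one defines a map $d_s^\G$ sending a class function $\chi$ on $\G^F$ to the function $u\mapsto\chi(su)$ on unipotents of the \emph{connected} centraliser $\G(s)^\circ$, and proves exchange formulae (Proposition~\ref{Exch}) expressing $d_s^\G\circ R_{\L'}^\G$ and $d_t^{\L'}\circ\tensor*[^*]{}{}R_{\L'}^\G$ in terms of the connected Lusztig maps between $\L'(s)^\circ$ and $\G(s)^\circ$. Applying $d_s^\L$ to both sides of the putative disconnected Mackey formula and pushing it through via these exchanges reduces everything to the connected Mackey formula for $\L(s)^\circ$ and $\tensor*[^g]{}{}\M(s)^\circ$ inside $\G(s)^\circ$, which holds by \cite{BonMi} under the stated hypotheses. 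A careful double-counting of the index sets then matches the two sides. Since this works for every semisimple $s$ and Jordan decomposition separates class functions, the full formula follows. The semisimplicity of $\G/\Go$ is used to guarantee that all unipotents lie in $\Go$, so that $d_s$ genuinely lands in class functions on a connected group and the reduction is available.
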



Digne and Michel proved the result when both $\L$ and $\M$ are tori of $\G$ (see \cite[theorem 3.2]{DiMi3}). Moreover, they obtained 
a formula for each of the connected components of $\G$, in the case where $\L$ and $\M$ are contained in $F$-stable parabolic subgroups of $\G$ (see \cite[theorem 3.2]{DiMi}). We used their formula for tori to guess our general Mackey formula: rewriting the formula, one can obtain the set of indices 
$\mathcal S_{\G}(\L,\M)$, and find which induction and restriction maps we should use in the sum. Then, the strategy of the proof follows the argument of Bonnafé in the paper \cite{Bon} by induction reducing to the unipotent part of centralizers of semisimple elements (see \cite[proposition 2.3.4]{Bon}). This allows us to reduce the formula to the connected case.

In \textsection 2, we will give the general setup for the article, bringing some results from \cite{DiMi}. In \textsection 3, we will define the induction and restriction maps that will appear in the formula. We will get a character formula for them and obtain the result that let us reduce the problem to connected reductive groups. Finally, in \textsection 4, we will state the Mackey formula, and we will prove it.

I would like to thank Olivier Dudas for all the help given during the discussions, and Jean Michel for answering my questions on his article with François Digne.

\section{General setting.}
\label{sec_gen_setting}

%
%

We will consider $\G$ a reductive group (not necessarily connected) over $\Fqbar$. Let $\Go$ be its identity component. Let $F$ be a Frobenius endomorphism of $\G$, endowing it with a rational structure over $\F_q$. Let us denote the same way its restriction to $\Go$, and let $\G^F$, $\GoF$ be the corresponding groups of rational points.

We take the definitions 
from \cite{DiMi}: We call \emph{parabolic subgroup} of $\G$ a group of the form $\P=N_{\G}(\Po)$, where $\Po$ is a parabolic subgroup of $\Go$; and \emph{Levi subgroup} of $\G$ one of the form $\L=N_{\G}(\Lo,\Po)$ for a parabolic subgroup $\Po$ and a Levi complement  $\Lo$ of $\Po$. If $\U=\operatorname{R}_u(\Po)$ we still have a Levi decomposition of $\P$ in $\G$: $\P=\L\ltimes\U$. 
 
 
We will suppose that the quotient $\G/\Go$ 
consists on semisimple elements.
This condition ensures that 
an element is "quasi-semisimple" if and only if it is semisimple (see \cite[definition 1.1]{DiMi} for a definition of quasi-semisimple element). 
 For a semisimple element $s$ of $\G$ we denote $\G(s)$ its centraliser. Again, the previous condition on $\G$ implies that all the unipotent elements of $\G(s)$ 
lie in its identity component $\G(s)^{\circ}$ (see \cite[remark 2.7]{DiMi}).

We have that $\G(s)^{\circ}=(\G^{\circ}(s))^{\circ}$: The second set is included in the first set; and $\G(s)^{\circ}$ is connected, then $\G(s)^{\circ}\subset \Go$ . Therefore, we get $\G(s)^{\circ}\subset \Go(s)$ and the other inclusion. This allows us to apply the results on the article by Digne and Michel to $\G(s)^{\circ}$: Following \cite[theorem 1.8]{DiMi}, $\G(s)^{\circ}$ is a connected reductive group. \cite[proposition 1.11]{DiMi} tells us that if $s\in\L\subset\P$, then $\P(s)^{\circ}$ is a parabolic subgroup of $\G(s)^{\circ}$, $\U(s)^{\circ}$ is its unipotent radical (where $\U=\operatorname R_{u}(\Po)$) and $\L(s)^{\circ}$ is a Levi subgroup of $\G(s)^{\circ}$.

%
%

\section{Green functions over $\G$ and generalised character formula.}

We will always consider representations over $\Ql$, with $\ell$ a prime number coprime with $q$. 
%
%
Let us recall the character formula for the maps $R_{\L}^{\G}$ and $\tensor*[^*]{}{}R_{\L}^{\G}$ defined in \cite[proposition 2.6]{DiMi}: Let $\L$ be a rational Levi subgroup of $\G$. 
We take $g\in \G^F$, $l\in\L^F$, $\chi$  a class function on $\G^F$ and $\psi$ a class function on $\L^F$. Let $Q_{\Lo}^{\Go}$ be the map from $\Go_u\times \Lo_u$ to $\Ql$ that sends $(u,v)$ to $\operatorname{Trace}((u,v)|H_c^*(\Y^{\circ}_{\U}))$, where $\Y^{\circ}_{\U}$ is the Deligne-Lusztig variety that defines $R_{\Lo}^{\Go}$. $Q_{\Lo}^{\Go}$ is called the \emph{Green function} associated to $\Go$ and $\Lo$. Then:
\begin{itemize}
\item If $\chi$ is a class function of $\L^F$, and $g=su$ is the Jordan decomposition of $g$, with $s$ semisimple and $u$ unipotent, then
\[
R_{\L}^{\G}(\chi)(g) = \frac{1}{|\L^F||\G(s)^{\circ F}|}\sum_{\{h\in\G^F|s\in\tensor*[^h]{}{}\L\}}\sum_{v\in (\tensor*[^h]{}{}\L(s)^{\circ})_u^F} Q_{\tensor*[^h]{}{}\L(s)^{\circ}}^{\G(s)^{\circ}}(u,v^{-1})\tensor*[^h]{}{}\chi(sv).
\]

\item If $\phi$ is a class function of $\G^F$ and $l=tv$ is the Jordan decomposition of $l$, then:
\[
\tensor*[^*]{}{}R_{\L}^{\G}(\psi)(l)=\frac{1}{|\G(t)^{\circ F}|} \sum_{u\in (\G(t)^{\circ})_u^F} Q_{\L(t)^{\circ}}^{\G(t)^{\circ}}(u^{-1},v)\psi(tu).
\]
\end{itemize}

We will need a character formula for a more general Lusztig induction and restriction:
Let $\G'$ be a normal subgroup of $\G$ containing $\Go$. Since the unipotent radical of $\G'$ is contained in that of $\Go$ (it is connected, unipotent, closed and normal in $\Go$ too), we have that $\G'$ is a reductive group. Let $\L'$ be a rational Levi subgroup of $\G'$.
We note
\[
R_{\L'}^{\G}= \Ind_{\G'^F}^{\G^F}\circ R_{\L'}^{\G'}, \tensor*[^*]{}{}R_{\L'}^{\G}= \tensor*[^*]{}{}R_{\L'}^{\G'} \circ \Res_{\G'^F}^{\G^F}.
\]
We take $g\in\G^F$ and $\chi$ a class function of $\L'^F$. 
Since $\G'$ is a normal subgroup of $\G$, we have that 
\begin{equation} \label{eq_R_ind}
R_{\L'}^{\G}\chi(g)=\left\{\begin{array}{ll}
\sum_{\tau\in\G^F/\G'^F}R_{\L'}^{\G'}\tensor*[^{\tau}]{}{}\chi(g)& \text{if } g\in\G'^F,\\
0 & \text{otherwise.}
\end{array}\right. 
\end{equation} 


Let $g\in \G'^F$, and let $g=su$ be the Jordan decomposition of $g$. We remark that $s$ is in $\G'^F$, since $u$ is in $\Go$.

\begin{prop}[Generalized character formula]\label{char}
Let $g\in\G^F$ and $g=su$ be the Jordan decomposition of $g$. We have that
\[
R_{\L'}^{\G}\chi(g)= 
\frac{1}{|\L'^F||\G(s)^{\circ F}|}
\sum_{\{h\in\G^F|s\in\tensor*[^h]{}{}\L'\}}
\sum_{v\in (\tensor*[^h]{}{}\L'(s)^{\circ})_u^F}
Q_{\tensor*[^h]{}{}\L'(s)^{\circ}}^{\G(s)^{\circ}}(u,v^{-1})
\tensor*[^h]{}{}\chi(sv).
\]
\end{prop}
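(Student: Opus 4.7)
The plan is to use equation (\ref{eq_R_ind}) to expand $R_{\L'}^{\G}\chi(g)$ as a sum over cosets $\tau\in\G^F/\G'^F$, apply the character formula recalled at the beginning of this section to each summand (since $\G'$ is itself reductive), and combine the resulting double sum by a change of variables. The argument splits naturally into the cases $g\in\G'^F$ and $g\notin\G'^F$.

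If $g\notin\G'^F$, the left-hand side vanishes by (\ref{eq_R_ind}). For the right-hand side, any $h\in\G^F$ with $s\in {}^h\L'$ would force $s\in{}^h\G'=\G'$ by normality of $\G'$ in $\G$; since $\G/\Go$ consists of semisimple elements, $u\in\Go\subset\G'$, and so $g=su\in\G'$, a contradiction. Hence the index set is empty and both sides are zero.

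Now assume $g\in\G'^F$, so that $s\in\G'^F$ as well. Applying (\ref{eq_R_ind}) rewrites $R_{\L'}^{\G}\chi(g)$ as a sum over coset representatives $\tau\in\G^F/\G'^F$ of terms $R_{{}^\tau\L'}^{\G'}({}^\tau\chi)(g)$, where ${}^\tau\chi$ is interpreted as a class function on ${}^\tau\L'^F$. The group $\G'$ is reductive with $\Go$ as identity component, and the argument of \textsection\ref{sec_gen_setting} gives $\G'(s)^{\circ}=\G(s)^{\circ}$, so the character formula recalled at the beginning of this section applies verbatim to each $R_{{}^\tau\L'}^{\G'}({}^\tau\chi)(g)$. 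Using $|{}^\tau\L'^F|=|\L'^F|$, $\G'(s)^{\circ}=\G(s)^{\circ}$, and the identifications ${}^h({}^\tau\L')={}^{h\tau}\L'$ and ${}^h({}^\tau\chi)={}^{h\tau}\chi$, this produces a double sum indexed by pairs $(\tau,h)$ with $\tau$ a coset representative and $h\in\G'^F$ satisfying $s\in {}^{h\tau}\L'$.

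The concluding step is the substitution $h'=h\tau$, which is a bijection between such pairs and $\{h'\in\G^F\mid s\in{}^{h'}\L'\}$. The crucial observation is that every factor of the summand—the Green function $Q^{\G(s)^{\circ}}_{{}^{h\tau}\L'(s)^{\circ}}$, the variable $v\in({}^{h\tau}\L'(s)^{\circ})_u^F$, and the character term ${}^{h\tau}\chi(sv)$—depends on $h$ and $\tau$ only through the product $h\tau=h'$, so the double sum collapses cleanly into the single sum over $h'\in\G^F$ in the statement. Beyond this substitution the argument is pure bookkeeping; the main subtlety to anticipate is maintaining consistent conventions for conjugated Levis and characters so that the bijection $(\tau,h)\leftrightarrow h'$ really aligns every term.
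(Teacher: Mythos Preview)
Your argument is correct and follows the same overall strategy as the paper---expand via (\ref{eq_R_ind}), apply the Digne--Michel character formula termwise inside $\G'$, then merge the sums---but you organise the conjugation differently. The paper keeps $\L'$ fixed and evaluates $R_{\L'}^{\G'}\chi$ at the conjugated point $\dot\tau^{-1}g\dot\tau$; this forces it to prove an auxiliary lemma, namely that the Green function $Q^{\G'(s)^{\circ}}_{\tensor*[^h]{}{}\L'(s)^{\circ}}$ is invariant under the automorphism $\phi=\ad(\dot\tau^{-1})$ of $\G'$, by exhibiting an explicit $F$-equivariant isomorphism of Deligne--Lusztig varieties $\Y_{\tilde\U}\to\Y_{\phi(\tilde\U)}$. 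You instead transport the structure first, rewriting each summand as $R_{\tensor*[^{\tau}]{}{}\L'}^{\G'}(\tensor*[^{\tau}]{}{}\chi)(g)$ and then applying the character formula at $g$ itself; after that, every ingredient visibly depends only on the product $h\tau$, and the substitution $h'=h\tau$ finishes immediately. Your route is tidier, but the step you pass over without comment---the identity $\tensor*[^{\tau}]{}{}\bigl(R_{\L'}^{\G'}\chi\bigr)=R_{\tensor*[^{\tau}]{}{}\L'}^{\G'}(\tensor*[^{\tau}]{}{}\chi)$, which needs $\ad(\dot\tau)$ to carry the Deligne--Lusztig variety for $(\G',\L')$ isomorphically to that for $(\G',\tensor*[^{\tau}]{}{}\L')$ compatibly with the $(\G'^F,\L'^F)$-action---is exactly the content the paper isolates in its Green-function lemma. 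So the two proofs are the same computation viewed from opposite ends: the paper makes the one geometric ingredient explicit, while your version absorbs it into a transport-of-structure assertion that deserves at least a one-line justification.
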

\begin{remark}

\begin{itemize}[label=$\bullet$]

\item Since all unipotent elements of $\G$ lie in $\Go$ and $\G'$ contains $\Go$, we have that $g$ belongs to $\G'^F$ if and only if $s$ belongs to $\G'^F$. Therefore, the formula holds if we take $g$ not in $\G'^F$, since 
then the first sum on the right hand side is zero. 
\item The formula for the modified restriction is the same as in the usual case, it consists on just applying the usual character formula to an element of $\L'^F$. 
\end{itemize}
\end{remark}

\begin{proof}
Following the previous remark, we will suppose that $g$ belongs to $\G'^F$. 

From now on, we fix a set of representatives $\{\dot{\tau}\}_{\tau\in\G^F/\G'^F}$ of the left cosets of $\G^F$ modulo $\G'^F$. We recall that $\
\tensor*[^{\tau}]{}{}\chi(g)=\chi(\dot{\tau}^{-1}g\dot{\tau})$. From now on, we fix a left coset $\tau$, and we call $\phi$ the automorphism given by conjugation by $\dot{\tau}^{-1}$. 

We have that $\phi(g)=\phi(s)\phi(u)$, and $\phi(s)$ and $\phi(u)$ are still semisimple and unipotent elements, respectively. Therefore, we will apply the character formula to each of the terms of the sum in \ref{eq_R_ind}, that is, we will apply it to each element $\phi(g)=\phi(s)\phi(u)$. 
We will do some simplifications before:
\begin{itemize}[label=$\bullet$]
\item Let $h\in\G^F$. We have that $s$ belongs to $\tensor*[^h]{}{}\L'$ if and only if $\phi(s)$ belongs to $\phi(\tensor*[^h]{}{}\L')=\tensor*[^{\dot{\tau}^{-1}h}]{}{}\L'$. Therefore $\{h'\in\G^F|\phi(s)\in\tensor*[^{h'}]{}{}\L'\}=
\dot{\tau}^{-1}\{h\in\G^F|s\in\tensor*[^h]{}{}\L'\}$. And the first sum will have the elements $h'$ in $\G'^F$, so they must come from elements $h$ in $\dot{\tau}\G'^F=\tau$. Therefore
\[
\{h\in\G'^F|\phi(s)\in\tensor*[^h]{}{}\L'\}=
\dot{\tau}^{-1}\{h\in\tau|s\in\tensor*[^h]{}{}\L'\}.
\] The first sum will have as indexes the $h$ in $\tau$; in exchange, we will have $\dot{\tau}^{-1} h$ appearing in the terms of the sum. 
\item The algebraic group $\tensor*[^{\dot{\tau}^{-1}h}]{}{}\L'(\phi(s))^{\circ}$ will appear in the second set of indexes and in the sum. But:
\[ 
\begin{array}{rcl}
\phi(\tensor*[^{h}]{}{}\L'(s)^{\circ}) & = & \{\phi(a)~|~a\in\tensor*[^{h}]{}{}\L', as=sa\} \\
 & = & \{b\in\phi(\tensor*[^{h}]{}{}\L')~|~b\phi(s)=\phi(s)b\} \\
 & = & \tensor*[^{\dot{\tau}^{-1}h}]{}{}\L'(\phi(s))^{\circ},
\end{array}
\] since $a$ commutes with $s$ if and only if $b=\phi(a)$ commutes with $\phi(s)$. Therefore, we are allowed to take $v$ in $\tensor*[^{h}]{}{}\L'(s)^{\circ}$ in the second sum, and the set $\phi(\tensor*[^{h}]{}{}\L'(s)^{\circ})$ and the element $\phi(v)$ will appear in the terms of the sum.
\end{itemize}

So we have that
\begin{align*}
R_{\L'}^{\G}\chi(g) = & 
\frac{1}{|\L'^F||\G'(s)^{\circ F}|}
\sum_{\tau\in\G^F/\G'^F} \\
& \sum_{\{h\in\tau|s\in\tensor*[^h]{}{}\L'\}}
\sum_{v\in (\tensor*[^h]{}{}\L'(s)^{\circ})_u^F}
Q_{\phi(\tensor*[^h]{}{}\L'(s)^{\circ})}^{\phi(\G'(s)^{\circ})}(\phi(u),\phi(v)^{-1})
\tensor*[^{\dot{\tau}^{-1}h}]{}{}\chi(\phi(sv)).
\end{align*} 

First, we will rewrite the first term of the sum, the Green function applied to $(\phi(u), \phi(v))$: 

\begin{lemma}
\[Q_{\phi(\tensor*[^h]{}{}\L'(s)^{\circ})}^{\phi(\G'(s)^{\circ})}(\phi(u),\phi(v)^{-1})= Q_{\tensor*[^h]{}{}\L'(s)^{\circ}}^{\G'(s)^{\circ}}(u,v^{-1}).\]
\end{lemma}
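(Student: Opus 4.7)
The plan is to reduce the identity to the statement that the Deligne--Lusztig variety defining each Green function is transported isomorphically, in an $F$-equivariant way, under the inner automorphism $\phi$.

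First I would recall that $\phi$ is conjugation by the rational element $\dot{\tau}^{-1}\in\G^F$, so $\phi$ is an isomorphism of algebraic groups that commutes with $F$. Next I would unfold the definition of $Q_{\tensor*[^h]{}{}\L'(s)^{\circ}}^{\G'(s)^{\circ}}(u,v^{-1})$: picking a parabolic $\mathbf{P}(s)^{\circ}$ of $\G'(s)^{\circ}$ with Levi factor $\tensor*[^h]{}{}\L'(s)^{\circ}$ and unipotent radical $\mathbf{U}(s)^{\circ}$, the value is $\operatorname{Trace}\bigl((u,v^{-1})\mid H_c^*(\mathbf{Y}^{\circ}_{\mathbf{U}(s)^{\circ}})\bigr)$. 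On the other side, I would take as auxiliary parabolic precisely $\phi(\mathbf{P}(s)^{\circ})$, which is a parabolic of $\phi(\G'(s)^{\circ})$ with Levi complement $\phi(\tensor*[^h]{}{}\L'(s)^{\circ})$ and unipotent radical $\phi(\mathbf{U}(s)^{\circ})$, by transport of structure.

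Then I would check that $\phi$ induces an isomorphism of the associated Deligne--Lusztig varieties
\[
\mathbf{Y}^{\circ}_{\mathbf{U}(s)^{\circ}}\xrightarrow{\;\phi\;}\mathbf{Y}^{\circ}_{\phi(\mathbf{U}(s)^{\circ})},
\]
since the defining condition $g^{-1}F(g)\in\mathbf{U}(s)^{\circ}$ becomes $\phi(g)^{-1}F(\phi(g))\in\phi(\mathbf{U}(s)^{\circ})$ using $\phi\circ F=F\circ\phi$. This isomorphism is equivariant with respect to the actions of the relevant groups: the action of $(a,b)\in \G'(s)^{\circ F}\times \tensor*[^h]{}{}\L'(s)^{\circ F}$ on the source corresponds to the action of $(\phi(a),\phi(b))$ on the target. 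Pulling back to cohomology, the operator $(u,v^{-1})$ on $H_c^*(\mathbf{Y}^{\circ}_{\mathbf{U}(s)^{\circ}})$ is conjugate, via the isomorphism induced by $\phi$, to the operator $(\phi(u),\phi(v)^{-1})$ on $H_c^*(\mathbf{Y}^{\circ}_{\phi(\mathbf{U}(s)^{\circ})})$, so their traces coincide; this is exactly the desired equality.

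The only non-routine point is to verify that the two Green functions really are computed from the same data up to $\phi$; the subtlety is that the Green function formally depends on a choice of parabolic, so one must take the parabolic on the left-hand side to be the image under $\phi$ of the parabolic chosen on the right-hand side. Once this choice is made, everything else is functorial. I would expect the author to phrase the argument as a transport of structure without labouring the details, since $\phi$ is an $F$-equivariant isomorphism and all the objects entering the Green function are manifestly natural with respect to such isomorphisms.
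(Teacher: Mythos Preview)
Your approach is correct and matches the paper's own proof: both argue that $\phi$, being conjugation by a rational element and hence an $F$-commuting isomorphism of algebraic groups, carries the Deligne--Lusztig variety together with its $(u,v^{-1})$-action isomorphically onto the $\phi$-image side, forcing the two traces to coincide. One minor correction: the defining condition for the variety is $g^{-1}F(g)\in\tilde{\U}\,F(\tilde{\U})$ rather than $g^{-1}F(g)\in\tilde{\U}$, and the paper closes by citing \cite[proposition 10.12(ii)]{DiMi2} for the trace identity, though your direct functoriality argument is equally valid.
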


\begin{proof}
We recall that, if $\Po$ is a parabolic subgroup of $\Go$ and $\Lo$ is a Levi complement of $\Po$ such that $\L'=N_{\G'}(\Lo,\Po)$, then, naming $\P'=N_{\G'}(\Po)$, we have that $\P'=\L'\ltimes\U'$ is a Levi decomposition, with $\U'=\operatorname{R}_u(\Po)$ (in $\G'$). We have a similar Levi decomposition for the centralizers of the element $s$, by \cite[proposition 1.11]{DiMi} (see section \ref{sec_gen_setting}): $\tensor*[^h]{}{}\P'(s)^{\circ}=\tensor*[^h]{}{}\L'(s)^{\circ} \ltimes \tensor*[^h]{}{}\U'(s)^{\circ}$. We will call 
$\tensor*[^h]{}{}\U'(s)^{\circ}=\tilde{\U}$. 
We have that $Q_{\tensor*[^h]{}{}\L'(s)^{\circ}}^{\G'(s)^{\circ}}(u,v^{-1})=\Trace((u,v^{-1})|H_c^*(\Y_{\tilde{\U}}))$, being $\Y_{\tilde{\U}}= \{\tilde g\tilde{\U} \in \G'(s)^{\circ}/\tilde{\U}~|~\tilde{g}^{-1}F(\tilde{g})\in\tilde{\U}F(\tilde{\U})\}$. 

%
%
%
%
The map $\phi=\ad(\dot{\tau}^{-1})$ is an isomorphism of algebraic groups from $\G'(s)^{\circ}$ to $\phi(\G'(s)^{\circ})$, that sends the decomposition $\tensor*[^h]{}{}\P'(s)^{\circ} = \tensor*[^h]{}{}\L'(s)^{\circ} \ltimes \tilde{\U}$ to $\phi(\tensor*[^h]{}{}\P'(s)^{\circ}) = \phi(\tensor*[^h]{}{}\L'(s)^{\circ}) \ltimes \phi(\tilde{\U})$. Furthermore
:
\[
\begin{array}{rcl}
\Y_{\phi(\tilde{\U})}& = &\{\tilde c\phi(\tilde{\U})\in \phi(\G'(s)^{\circ}) / \phi(\tilde{\U}) ~|~\tilde c^{-1}F(\tilde c)\in \phi(\tilde{\U})F(\phi(\tilde{\U}))\} \\
& = & \{\phi(c)\phi(\tilde{\U})~|~ c\in \G'(s)^{\circ},~\phi(c^{-1}F(c))\in \phi(\tilde{\U}F(\tilde{\U}))\}\\
& = & \{\phi(c \tilde{\U})~|~ c \tilde{\U} \in \G'(s)^{\circ}/\tilde{\U},~c^{-1}F(c)\in \tilde{\U}F(\tilde{\U})\},
\end{array}
\]
where we have used that $\phi=\ad(\dot{\tau}^{-1})$ commutes with $F$ since $\dot{\tau}\in\G^F$.
%
%
%
%

This means that the morphism of varieties
 $\phi$ restricts to a bijection (or, equivalently, a surjection with fibers isomorphic to points) from $\Y_{\tilde{\U}}$ to $\Y_{\phi(\tilde{\U})}$. Calling $(u\cdot,\cdot v^{-1})$ the action by left multiplication by $u\in\G'(s)^{\circ}$ and right multiplication by $v^{-1}\in\tensor*[^h]{}{}\L'(s)^{\circ}$ on $\Y_{\tilde{\U}}$, one can easily check that $\phi\circ (u\cdot,\cdot v^{-1})= (\phi(u)\cdot,\cdot \phi(v^{-1})) \circ \phi$. And both actions by multiplication are of finite order since $u$, $v$ are of finite order and the left multiplication commutes with the right multiplication. Hence \cite[proposition 10.12 (ii)]{DiMi2} yields
\[
\Trace((u,v^{-1})|H_c^*(\Y_{\tilde{\U}}))=\Trace((\phi(u),\phi(v)^{-1})|H_c^*(\Y_{\phi(\tilde{\U})})),
\]
as required.
\end{proof}

To finish, we see that 
\[
\tensor*[^{\dot{\tau}^{-1}h}]{}{}\chi(\phi(sv))= \chi(h^{-1}\dot{\tau}\dot{\tau}^{-1}(sv)\dot{\tau}\dot{\tau}^{-1}h)= \chi(h^{-1}svh)=\tensor*[^{h}]{}{}\chi(sv).
\]
Thus
\[
R_{\L'}^{\G}\chi(g)= 
\frac{1}{|\L'^F||\G'(s)^{\circ F}|}
\sum_{\tau\in\G^F/\G'^F}
\sum_{\{h\in\tau|s\in\tensor*[^h]{}{}\L'\}} 
\sum_{v\in (\tensor*[^h]{}{}\L'(s)^{\circ})_u^F}
Q_{\tensor*[^h]{}{}\L'(s)^{\circ}}^{\G'(s)^{\circ}}(u,v^{-1})
\tensor*[^h]{}{}\chi(sv),
\] and we have the result by merging the first two sums, and taking into account that $\G(s)^{\circ}= (\G^{\circ}(s))^{\circ} = \G'(s)^{\circ} $. 
\end{proof}

Now, we will introduce a function that will allow us to go from the Lusztig induction on disconnected groups to that of connected groups: Let $s$ be a semisimple element of $\G$. We define $d_s^{\G}$ as the map from the class functions of $\G^F$ to the class functions of $\G(s)^{\circ F}$ sending $\chi$ to
\[
d_s^{\G}(\chi):u\longmapsto\left\{ \begin{array}{ll}
\chi(su) & \text{if } u \text{ is unipotent,} \\
0 & \text{otherwise.}
\end{array} \right.
\] Here, again, we use that the quotient $\G/\Go$ consists on semisimple elements.

\begin{prop}[Exchange formulae]\label{Exch}
Let $\G'$, $\L'$ be as in the previous proposition. We have that:
\[
d_s^{\G}\circ R_{\L'}^{\G}= \frac{1}{|\L'^F||\G(s)^{\circ F}|}\sum_{\{h\in\G^F|s\in\tensor*[^h]{}{}\L'\}}|\tensor*[^h]{}{}\L'(s)^{\circ F}| R_{\tensor*[^h]{}{}\L'(s)^{\circ}}^{\G(s)^{\circ}} \circ d_s^{\tensor*[^h]{}{}\L'} \circ 
\ad(h),
\]
\[
d_t^{\L'} \circ \tensor*[^*]{}{}R_{\L'}^{\G}= \tensor*[^*]{}{}R_{\L'(t)^{\circ}}^{\G(t)^{\circ}}\circ d_t^{\G}.
\] 
\end{prop}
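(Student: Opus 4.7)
The strategy is to verify both identities by evaluating each side pointwise. On the left-hand sides I apply the generalised character formula of Proposition \ref{char} (or its standard restriction analogue); on the right-hand sides I apply the connected-case character formula of \cite[proposition 2.6]{DiMi} inside $\G(s)^{\circ}$ or $\G(t)^{\circ}$. This is legitimate because, as recalled in Section \ref{sec_gen_setting}, these centraliser components are connected reductive groups, and $\tensor*[^h]{}{}\L'(s)^{\circ}$ (resp.\ $\L'(t)^{\circ}$) is a Levi subgroup of $\G(s)^{\circ}$ (resp.\ $\G(t)^{\circ}$).

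For the first identity, I would first observe that both sides are class functions on $\G(s)^{\circ F}$ supported on unipotent elements: the left-hand side by definition of $d_s^{\G}$, and the right-hand side because $d_s^{\tensor*[^h]{}{}\L'}$ has unipotent support and the connected character formula sends such a function to one of the same support (any nontrivial semisimple part of the argument forces a zero contribution). It then suffices to compare the two sides at a unipotent $u \in \G(s)^{\circ F}$. The left-hand side equals $R_{\L'}^{\G}\chi(su)$ and is expanded directly by Proposition \ref{char}. For each summand of the right-hand side, applying the connected character formula to $R_{\tensor*[^h]{}{}\L'(s)^{\circ}}^{\G(s)^{\circ}}(d_s^{\tensor*[^h]{}{}\L'}(\tensor*[^h]{}{}\chi))(u)$ at the Jordan decomposition $u = 1\cdot u$ in $\G(s)^{\circ}$ yields a sum over all $h' \in \G(s)^{\circ F}$. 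Unfolding $d_s^{\tensor*[^h]{}{}\L'}(\tensor*[^h]{}{}\chi)(h'^{-1}vh')$ and using that $h'$ centralises $s$, each term rewrites as $Q_{\tensor*[^{h'h}]{}{}\L'(s)^{\circ}}^{\G(s)^{\circ}}(u,v^{-1})\,\tensor*[^{h'h}]{}{}\chi(sv)$ via the identification $\tensor*[^{h'}]{}{}(\tensor*[^h]{}{}\L'(s)^{\circ}) = \tensor*[^{h'h}]{}{}\L'(s)^{\circ}$. The substitution $k = h'h$ reparametrises the double sum: each $k$ with $s \in \tensor*[^k]{}{}\L'$ arises from exactly $|\G(s)^{\circ F}|$ pairs $(h,h')$ (for fixed $k$ the pair is determined by an arbitrary choice of $h' \in \G(s)^{\circ F}$, and the constraints $h\in\G^F$, $s\in\tensor*[^h]{}{}\L'$ become automatic). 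One factor of $|\G(s)^{\circ F}|$ then cancels, the factors $|\tensor*[^h]{}{}\L'(s)^{\circ F}|$ cancel against the denominator of the connected character formula, and the resulting expression matches Proposition \ref{char}.

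For the second identity, evaluate both sides at $v \in \L'(t)^{\circ F}$ with Jordan decomposition $v = v_s v_u$. On the left-hand side, $d_t^{\L'}$ vanishes unless $v$ is unipotent, in which case it returns $\tensor*[^*]{}{}R_{\L'}^{\G}\psi(tv)$; applying the restriction character formula to the Jordan decomposition $tv = t\cdot v$ in $\G$ produces the desired expression. On the right-hand side, the connected restriction character formula in $\G(t)^{\circ}$ expresses $\tensor*[^*]{}{}R_{\L'(t)^{\circ}}^{\G(t)^{\circ}}(d_t^{\G}\psi)(v)$ as a sum weighted by $d_t^{\G}\psi(v_s u)$, which vanishes unless $v_s u$ is unipotent; as $v_s$ is semisimple and commutes with the unipotent $u$, this forces $v_s = 1$. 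Thus the right-hand side also vanishes off unipotent $v$, and on unipotent $v$ both sides collapse to $\tfrac{1}{|\G(t)^{\circ F}|}\sum_{u \in (\G(t)^{\circ})_u^F} Q_{\L'(t)^{\circ}}^{\G(t)^{\circ}}(u^{-1},v)\psi(tu)$. The main obstacle is the combinatorial bookkeeping in the first identity—the conjugation identity above, the effect of $h'$ centralising $s$ on the argument of $\chi$, and the computation of the multiplicity $|\G(s)^{\circ F}|$ in the change of variables; the second identity is essentially immediate once one observes that connected Lusztig restriction preserves unipotent support.
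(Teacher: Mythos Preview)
Your argument is correct and follows the same overall plan as the paper: evaluate both sides at a point and compare via character-type formulae. The one substantive difference is in how you expand the right-hand side of the first identity. You apply the full connected character formula of \cite[proposition 2.6]{DiMi} to $R_{\tensor*[^h]{}{}\L'(s)^{\circ}}^{\G(s)^{\circ}}$ at the trivial Jordan decomposition $u=1\cdot u$, which introduces an extra sum over $h'\in\G(s)^{\circ F}$ that you then collapse by the change of variables $k=h'h$ and the multiplicity count $|\G(s)^{\circ F}|$. The paper instead uses the defining trace formula for Lusztig induction, writing $|\tensor*[^h]{}{}\L'(s)^{\circ F}|\,R_{\tensor*[^h]{}{}\L'(s)^{\circ}}^{\G(s)^{\circ}}(\phi)(u)=\sum_{l}\Trace((u,l^{-1})\mid H_c^*(\Y_{\tilde\U}))\,\phi(l)$ directly; the factor $|\tensor*[^h]{}{}\L'(s)^{\circ F}|$ in the statement is there precisely to cancel the normalising denominator, the sum over $l$ reduces to unipotent $v$ because $d_s$ kills non-unipotent elements, and the trace is then literally the Green function. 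This bypasses entirely the ``combinatorial bookkeeping'' you flag as the main obstacle: there is no inner sum over $h'$, no conjugation identity to check, and no multiplicity computation. Your route works, but the paper's is shorter; for the second identity the two arguments are essentially identical.
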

\begin{proof}
Let $u\in (\G(s)^{\circ})_u^F$, and let $\chi$ be a class function of $\L'$. On the one hand, we use \ref{char} on the left hand side:

\begin{align*}
d_s^{\G}(R_{\L'}^{\G} \chi)(u) & = R_{\L'}^{\G}\chi (su) \\
& = \frac{1}{|\L'^F||\G(s)^{\circ F}|}
\sum_{\{h\in\G^F|s\in\tensor*[^h]{}{}\L'\}}
\sum_{v\in (\tensor*[^h]{}{}\L'(s)^{\circ})_u^F}
Q_{\tensor*[^h]{}{}\L'(s)^{\circ}}^{\G(s)^{\circ}}(u,v^{-1})
\tensor*[^{h}]{}{}\chi(sv).
\end{align*}

On the other hand, we apply the definition of the map $R_{\tensor*[^h]{}{}\L'(s)^{\circ}}^{\G(s)^{\circ}}$ to have:
\[
\frac{1}{|\L'^F||\G(s)^{\circ F}|}\sum_{\{h\in\G^F|s\in\tensor*[^h]{}{}\L'\}}|\tensor*[^h]{}{}\L'(s)^{\circ F}| R_{\tensor*[^h]{}{}\L'(s)^{\circ}}^{\G(s)^{\circ}} \circ d_s^{\tensor*[^h]{}{}\L'} \circ 
\ad(h)(\chi) (u) 
\]
\[
= \frac{1}{|\L'^F||\G(s)^{\circ F}|}
\sum_{\{h\in\G^F|s\in\tensor*[^h]{}{}\L'\}}
\sum_{l\in \tensor*[^h]{}{}\L'(s)^{\circ F}} 
 \operatorname{Trace}((u,l^{-1})| H_c^*(\Y_{\tilde{\U}})) d_s^{\tensor*[^h]{}{}\L'} 
 \tensor*[^{h}]{}{}\chi(l)
\]
\[
= \frac{1}{|\L'^F||\G(s)^{\circ F}|}
\sum_{\{h\in\G^F|s\in\tensor*[^h]{}{}\L'\}}
\sum_{v\in (\tensor*[^h]{}{}\L'(s)^{\circ})_u^F} 
 \operatorname{Trace}((u,v^{-1})| H_c^*(\Y_{\tilde{\U}})) 
\tensor*[^{h}]{}{}\chi(sv),
\] where $\Y_{\tilde{\U}}$ is the usual Deligne-Lusztig variety for $\tensor*[^h]{}{}\L'(s)^{\circ} $ and $\G(s)^{\circ}$. For the second equality, we have used that $d_s^{\tensor*[^h]{}{}\L'}$ is zero in every non-unipotent element. And the last trace is $Q_{\tensor*[^h]{}{}\L'(s)^{\circ}}^{\G(s)^{\circ}}(u,v^{-1})$, so we obtain the required equality.

For the restriction, we take $\psi$ a class function of $\G^F$ and $v\in (\L'(t)^{\circ})_u^{F}$. We have:
\[ d_t^{\L'} \circ \tensor*[^*]{}{}R_{\L'}^{\G}\psi (v)=\tensor*[^*]{}{}R_{\L'}^{\G}(\psi)(tv)=
|\G(t)^{\circ F}|^{-1} \sum_{u\in (\G(t)^{\circ})_u^F} Q_{\L'(t)^{\circ}}^{\G(t)^{\circ}}(u^{-1},v)\psi(tu).
\]
And if $\Y_{\tilde{\U}'}$ is the usual Deligne-Lusztig variety for $\L'(t)^{\circ} $ and $\G(t)^{\circ}$:
\begin{align*}
\tensor*[^*]{}{}R_{\L'(t)^{\circ}}^{\G(t)^{\circ}}d_t^{\G}(\psi)(v) &= |\G(t)^{\circ F}|^{-1}\sum_{g\in \G(t)^{\circ F}}  \operatorname{Trace}((g,v)| H_c^*(\Y_{\tilde{\U}'})) d_t^{\G}(\psi)(g^{-1}) \\
& = |\G(t)^{\circ F}|^{-1}\sum_{u\in (\G(t)^{\circ})_u^F} 
\operatorname{Trace}((u^{-1},v)| H_c^*(\Y_{\tilde{\U}'})) \psi(tu).
\end{align*}
In the last equality, we have changed $g$ by $g^{-1}$ and we have used that $d_t^{\G}\psi$ vanishes on non-unipotent elements, whence the result.
\end{proof}

\section{General Mackey formula.}



Let us recall again the Mackey formula for connected reductive groups: Given two rational Levi subgroups $\Lo$ and $\Mo$ of $\Go$, 
we have that
\begin{equation}\label{Mackey_conn}
\tensor*[^*]{}{}R_{\Lo}^{\Go}\circ R_{\Mo}^{\Go}=\sum_{x\in \LoF\backslash \mathcal S_{\Go}(\Lo,\Mo)^F/ \MoF} R_{\Lo\cap\tensor*[^x]{}{}\Mo}^{\Lo} \circ \tensor*[^*]{}{}R_{\Lo\cap\tensor*[^x]{}{}\Mo}^{\tensor*[^x]{}{}\Mo} \circ \ad(x),
\end{equation}
where $\mathcal S_{\Go}(\Lo,\Mo)=\{x\in\Go~|~\Lo\cap\tensor*[^x]{}{}\Mo\text{ contains a maximal torus of }\Go\}$, so that $\Lo\cap\tensor*[^x]{}{}\Mo$ is a Levi subgroup of $\Go$. 
This formula holds when $\Go$, $\Mo$ and $\Lo$ satisfy one of the following conditions (see \cite[theorem 3.9]{BonMi}): 
\begin{itemize}
\item $\Lo$ and $\Mo$ are contained in rational parabolic subgroups. 
\item $\Lo$ or $\Mo$ is a torus. 
\item $q>2$ or $\Go$ has no component of type $\tensor*[^2]{}{}\operatorname{E}_6$, $\operatorname{E}_7$ or $\operatorname{E}_8$. 
\end{itemize}

%
%
%
%

We go back to our group $\G$. Let $\Po$, $\Qo$ be two parabolic subgroups of $\Go$, with respective Levi complements $\Lo$, $\Mo$. We consider the Levi subgroups $\L=N_{\G}(\Lo,\Po)$ and $\M=N_{\G}(\Mo,\Qo)$ of $\G$. We suppose that $\L$ and $\M$ are $F$-stable. We define
\[
\mathcal S_{\G}(\L,\M)=\{x\in\G|\L\cap\tensor*[^x]{}{}\M\text{ contains a maximal torus of } \Go\}.
\] It is the same definition than that for connected reductive groups. 
 But $x$ being in $\mathcal S_{\G}(\L,\M)$ does not imply that $\L\cap\tensor*[^x]{}{}\M$ is a Levi of $\G$. However, it will be it for a smaller group:

\begin{prop}\label{LxM_Levi}
Let $\pi$ be the quotient map $\G\longrightarrow\G/\Go$. Let $\G_x$ be the preimage by $\pi$ of the image of $\L\cap\tensor*[^x]{}{}\M$ by $\pi$. 
Then $\L\cap\tensor*[^x]{}{}\M$ is a Levi subgroup of $\G_x$.
\end{prop}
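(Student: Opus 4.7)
My plan is to exhibit an explicit parabolic $\Po_x$ of $\Go$ admitting $\Lo\cap\tensor*[^x]{}{}\Mo$ as a Levi complement, and then to check $\L\cap\tensor*[^x]{}{}\M=N_{\G_x}(\Lo\cap\tensor*[^x]{}{}\Mo,\Po_x)$, which by the definition recalled in \textsection\ref{sec_gen_setting} will make $\L\cap\tensor*[^x]{}{}\M$ a Levi subgroup of $\G_x$ (note that $\G_x^{\circ}=\Go$ since $\G_x\supseteq\Go$).

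The first ingredient is to identify the identity component of $\L\cap\tensor*[^x]{}{}\M$. Since $x\in\mathcal S_{\G}(\L,\M)$, the subgroup $\Lo\cap\tensor*[^x]{}{}\Mo$ contains a maximal torus of $\Go$, so by the connected theory recalled before \eqref{Mackey_conn} it is a (connected) Levi subgroup of $\Go$. The inclusions $\Lo\subseteq\L$ and $\tensor*[^x]{}{}\Mo\subseteq\tensor*[^x]{}{}\M$ place this connected subgroup inside $(\L\cap\tensor*[^x]{}{}\M)^{\circ}$, while the opposite inclusion $(\L\cap\tensor*[^x]{}{}\M)^{\circ}\subseteq\L^{\circ}\cap(\tensor*[^x]{}{}\M)^{\circ}=\Lo\cap\tensor*[^x]{}{}\Mo$ is automatic. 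The existence of a common maximal torus of $\Go$ in $\Po$ and $\tensor*[^x]{}{}\Qo$ further makes $\Lo\cap\tensor*[^x]{}{}\Qo$ a parabolic of $\Lo$ with Levi complement $\Lo\cap\tensor*[^x]{}{}\Mo$; pulling it back through $\Po\twoheadrightarrow\Lo$ yields
\[
\Po_x:=(\Lo\cap\tensor*[^x]{}{}\Qo)\cdot\operatorname{R}_u(\Po),
\]
a parabolic subgroup of $\Go$ whose Levi complement is still $\Lo\cap\tensor*[^x]{}{}\Mo$.

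I would then verify the two inclusions. An element $g\in\L\cap\tensor*[^x]{}{}\M$ lies in $\L=N_{\G}(\Lo,\Po)$, hence normalizes $\Lo$, $\Po$ and $\U:=\operatorname{R}_u(\Po)$; it also lies in $\tensor*[^x]{}{}\M=N_{\G}(\tensor*[^x]{}{}\Mo,\tensor*[^x]{}{}\Qo)$, hence normalizes $\tensor*[^x]{}{}\Mo$ and $\tensor*[^x]{}{}\Qo$. Consequently $g$ normalizes both $\Lo\cap\tensor*[^x]{}{}\Mo$ and $(\Lo\cap\tensor*[^x]{}{}\Qo)\U=\Po_x$, which gives $\L\cap\tensor*[^x]{}{}\M\subseteq N_{\G_x}(\Lo\cap\tensor*[^x]{}{}\Mo,\Po_x)$. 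For the reverse inclusion, any $g\in\G_x=\pi^{-1}(\pi(\L\cap\tensor*[^x]{}{}\M))$ can be written $g=g_0h$ with $g_0\in\Go$ and $h\in\L\cap\tensor*[^x]{}{}\M$. If $g$ normalizes the pair $(\Lo\cap\tensor*[^x]{}{}\Mo,\Po_x)$, then so does $g_0=gh^{-1}$; but in the connected group $\Go$, the simultaneous normalizer of a Levi and a parabolic containing it equals the Levi itself, so $g_0\in\Lo\cap\tensor*[^x]{}{}\Mo\subseteq\L\cap\tensor*[^x]{}{}\M$ and therefore $g\in\L\cap\tensor*[^x]{}{}\M$.

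The only mildly technical step is the ``parabolic meets parabolic'' calculation used to set up $\Po_x$, which rests on the hypothesis $x\in\mathcal S_{\G}(\L,\M)$ providing a common maximal torus in $\Po$ and $\tensor*[^x]{}{}\Qo$. Everything else is bookkeeping with the normalizer descriptions $\L=N_{\G}(\Lo,\Po)$ and $\tensor*[^x]{}{}\M=N_{\G}(\tensor*[^x]{}{}\Mo,\tensor*[^x]{}{}\Qo)$.
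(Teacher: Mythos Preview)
Your proof is correct and follows essentially the same route as the paper: the parabolic $\Po_x=(\Lo\cap\tensor*[^x]{}{}\Qo)\cdot\U$ you construct coincides with the paper's $(\Po\cap\tensor*[^x]{}{}\Qo)\cdot\U$ (since $\Po=\Lo\U$ and both groups contain $\To$ and the same root subgroups), and your inclusion $\L\cap\tensor*[^x]{}{}\M\subseteq N_{\G_x}(\Lo\cap\tensor*[^x]{}{}\Mo,\Po_x)$ is argued identically. The only difference is in the reverse inclusion: the paper first reduces to $\G=\G_x$ and then matches the finite quotients $\mathbf Z/(\mathbf Z\cap\Go)$ and $(\L\cap\tensor*[^x]{}{}\M)/(\Lo\cap\tensor*[^x]{}{}\Mo)$ by a cardinality argument, whereas you write $g=g_0h$ with $g_0\in\Go$, $h\in\L\cap\tensor*[^x]{}{}\M$ and use directly that $N_{\Go}(\Lo\cap\tensor*[^x]{}{}\Mo,\Po_x)=\Lo\cap\tensor*[^x]{}{}\Mo$ in the connected group. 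Your version is slightly more direct and avoids the preliminary reduction step.
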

\begin{proof}
First of all, since the unipotent radical of $\G_x$ is contained in $\Go$ (it is connected, unipotent, closed and normal in $\Go$), it is a reductive group.
%
%
%
%
Let $\L_x=\L\cap\G_x$ and $\M_x=\M\cap\G_x$. These are Levis of $\G_x$: We have that $\G_x\cap N_{\G}(\Lo,\Po)= N_{\G_x}(\Lo,\Po)$, similarly for $\M_x$. We remark that $\L_x\cap\tensor*[^x]{}{}\M_x=\L\cap\tensor*[^x]{}{}\M$. 
Since $\pi(\G_x)=\G_x/\Go$ is the image of the first map mentioned in the proof, we can take representatives of $\G_x/\Go$ in $\L_x\cap\tensor*[^x]{}{}\M_x$. This implies that the map $\L_x\cap\tensor*[^x]{}{}\M_x\hookrightarrow \G_x\twoheadrightarrow \G_x/\Go$ is surjective, and its kernel is $\L_x\cap\tensor*[^x]{}{}\M_x\cap\Go=\Lo\cap\tensor*[^x]{}{}\Mo$ (see the remark after \cite[definition 1.4]{DiMi}). 
Thus $(\L_x\cap\tensor*[^x]{}{}\M_x)/(\Lo\cap\tensor*[^x]{}{}\Mo)\cong \G_x/\Go$. 

Therefore, we will suppose, without loss of generality, that $\G=\G_x$, $\L=\L_x$ and $\M=\M_x$, so that 
\[(\L\cap\tensor*[^x]{}{}\M)/(\Lo\cap\tensor*[^x]{}{}\Mo)\cong \G/\Go,\]
and now we have to prove that $\L\cap\tensor*[^x]{}{}\M$ is a Levi of $\G$.

We recall that $\Lo\cap\tensor*[^x]{}{}\Mo$ is a Levi complement of the parabolic subgroup $\Po\cap\tensor*[^x]{}{}\Qo\cdot \U$ of $\Go$,
being $\U$ the unipotent radical of $\Po$. Let $\mathbf Z=N_{\G}(\Lo\cap\tensor*[^x]{}{}\Mo, \Po\cap\tensor*[^x]{}{}\Qo\cdot \U)$. We have to prove that $\L\cap\tensor*[^x]{}{}\M=\mathbf Z$: 
\begin{itemize}[label=$\bullet$]
\item On the one hand, the inclusion to the right comes from the fact that, if $n\in\G$ normalizes $\Lo, \Mo, \Po$ and $\tensor*[^x]{}{}\Qo$, then it will normalize the intersections $\Lo\cap\tensor*[^x]{}{}\Mo$ and $\Po\cap\tensor*[^x]{}{}\Qo$. And, since $\tensor*[^n]{}{}\Po=\Po$ and $\tensor*[^n]{}{}\Lo=\Lo$, we have that $\Po=\Lo\ltimes\tensor*[^n]{}{}\U$, being $\tensor*[^n]{}{}\U$ a maximal unipotent closed connected normal subgroup of $\P$. Therefore, it is equal to $\U$.
\item On the other hand, we can construct, as at the beginning of the proof, a map $\mathbf Z\hookrightarrow \G\twoheadrightarrow \G/\Go$. It is surjective, since $\L\cap\tensor*[^x]{}{}\M\subset\mathbf Z$, and its kernel is $\mathbf Z \cap \Go$. Thus we have that $\mathbf Z/(\mathbf Z\cap\Go)\cong \G/\Go\cong (\L\cap\tensor*[^x]{}{}\M)/(\Lo\cap\tensor*[^x]{}{}\Mo)$. We can construct similarly, by using the inclusion $\L\cap\tensor*[^x]{}{}\M\hookrightarrow \mathbf Z$, an injection $  (\L\cap\tensor*[^x]{}{}\M)/(\Lo\cap\tensor*[^x]{}{}\Mo)\hookrightarrow \mathbf Z/(\mathbf Z\cap\Go)$. But $\mathbf Z\cap\Go =\Lo\cap\tensor*[^x]{}{}\Mo$. Furthermore, the two previous quotients are bijective finite sets, so the injection must be a bijection of finite sets. So, given any element $z\in\mathbf Z$, the class $z(\Lo\cap\tensor*[^x]{}{}\Mo)$ contains a unique coset $m(\Lo\cap\tensor*[^x]{}{}\Mo)$, with $m\in \L\cap\tensor*[^x]{}{}\M$. This cosets must be equal, hence $z\in m(\Lo\cap\tensor*[^x]{}{}\Mo)\subset \L\cap\tensor*[^x]{}{}\M$, we get the other inclusion.
%
\end{itemize} 

Therefore, $\L\cap\tensor*[^x]{}{}\M=\mathbf Z$ is a Levi subgroup of $\G_x$.
\end{proof}

For an $x\in\mathcal S_{\G}(\L,\M)$ we will have, following the previous section:
\[
R_{\L\cap\tensor*[^x]{}{}\M}^{\L}= \Ind_{\L_x^F}^{\L^F}\circ R_{\L\cap\tensor*[^x]{}{}\M}^{\L_x}, \tensor*[^*]{}{}R_{\L\cap\tensor*[^x]{}{}\M}^{\tensor*[^x]{}{}\M}= \tensor*[^*]{}{}R_{\L\cap \tensor*[^x]{}{}\M}^{\tensor*[^x]{}{}\M_x}\circ \Res_{\tensor*[^x]{}{}\M_x}^{\tensor*[^x]{}{}\M}.
\]

\begin{definition}
We will call \emph{Mackey formula} the following expression:
\[
\tensor*[^*]{}{}R_{\L}^{\G}\circ R_{\M}^{\G}=\sum_{x\in \L^F\backslash \mathcal S_{\G}(\L,\M)^F/ \M^F} R_{\L\cap\tensor*[^x]{}{}\M}^{\L} \circ \tensor*[^*]{}{}R_{\L\cap\tensor*[^x]{}{}\M}^{\tensor*[^x]{}{}\M} \circ \ad(x).
\]
\end{definition}

We can rewrite the Mackey formula as follows:
\[
\tensor*[^*]{}{}R_{\L}^{\G}\circ R_{\M}^{\G}=\sum_{x\in \mathcal S_{\G}(\L,\M)^F} \frac{|\L^F \cap \tensor*[^x]{}{}\M^F|}{|\L^F||\M^F|} R_{\L\cap\tensor*[^x]{}{}\M}^{\L} \circ \tensor*[^*]{}{}R_{\L\cap\tensor*[^x]{}{}\M}^{\tensor*[^x]{}{}\M} \circ \ad(x):
\]
If we change $x$ by $xm$ with $m\in \M^F$ the maps will not change; and the class functions of $\M^F$ are stable under conjugation by elements of $\M^F$. And if we change $x$ by $lx$ with $l\in\L^F$ we will have that $\L\cap \tensor*[^{lx}]{}{}\M=\tensor*[^l]{}{}(\L\cap \tensor*[^{x}]{}{}\M)$. We use then that $\tensor*[^*]{}{}R_{\L\cap\tensor*[^{lx}]{}{}\M}^{\tensor*[^{lx}]{}{}\M} \circ \ad(l)=\ad(l) \circ \tensor*[^*]{}{}R_{\L\cap\tensor*[^x]{}{}\M}^{\tensor*[^x]{}{}\M}$ and $R_{\L\cap\tensor*[^{lx}]{}{}\M}^{\L} \circ \ad(l) = \ad(l) \circ R_{\L\cap\tensor*[^{x}]{}{}\M}^{\L}$. And $\ad(l)$ is trivial on the class functions of $\L^F$. So we will have $\L^F\tensor*[^x]{}{}\M^F$ different elements giving us the same map. Therefore, for the formula, we should divide by the cardinal of this set, that is $\frac{|\L^F||\M^F|}{|\L^F \cap \tensor*[^x]{}{}\M^F|}$.

We can now state our main theorem:

\begin{theorem}\label{Mackey_ds}
Let $(\G,\L,\M)$ be a triple of a reductive group and two Levi subgroups such that: 
\begin{enumerate}[label=(\roman*)]
\item For all $x\in\mathcal S_{\G}(\L,\M)$, the group $\G_x$ defined in \ref{LxM_Levi} is normal in $\G$.
\item One of this conditions is satisfied:
\begin{enumerate}
\item $\L$ and $\M$ are contained in stable parabolic subgroups,
\item $\L$ or $\M$ come from a torus of $\Go$, 
\item $q>2$ or $\Go$ has no simple component 
of type $\tensor*[^2]{}{}\operatorname{E}_6$, $\operatorname{E}_7$ or $\operatorname{E}_8$.
\end{enumerate}
\item $\G/\Go$ consists on semisimple elements.
\end{enumerate}
Let $s$ be a semisimple element in $\L^F$. Then we have that:
\[
d_s^{\L} \circ \tensor*[^*]{}{}R_{\L}^{\G}\circ R_{\M}^{\G}
= \sum_{x\in \L^F\backslash \mathcal S_{\G}(\L,\M)^F/ \M^F}
d_s^{\L} \circ R_{\L\cap\tensor*[^x]{}{}\M}^{\L} 
\circ \tensor*[^*]{}{}R_{\L\cap\tensor*[^x]{}{}\M}^{\tensor*[^x]{}{}\M} \circ \ad(x).
\]
\end{theorem}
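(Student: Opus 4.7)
The strategy mirrors Bonnafé's argument in the connected case: use the exchange formulae of Proposition~\ref{Exch} to move the functor $d_s$ past every Lusztig map on both sides, reducing the identity to one inside the connected reductive group $\G(s)^{\circ}$, where the classical Mackey formula~\eqref{Mackey_conn} applies. Hypothesis~(ii) is inherited by $(\G(s)^{\circ},\L(s)^{\circ},\tensor*[^h]{}{}\M(s)^{\circ})$ through \cite[proposition 1.11]{DiMi}: Levi subgroups of parabolics remain such after $(\cdot)(s)^{\circ}$, a torus stays a torus, and the type hypothesis is trivially inherited.

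For the left-hand side I first apply the restriction exchange formula to obtain $\tensor*[^*]{}{}R_{\L(s)^{\circ}}^{\G(s)^{\circ}}\circ d_s^{\G}\circ R_{\M}^{\G}$, then use the induction exchange formula (with $\G'=\G$) to rewrite this as
\[
\sum_{\substack{h\in\G^F\\ s\in\tensor*[^h]{}{}\M}}\frac{|\tensor*[^h]{}{}\M(s)^{\circ F}|}{|\M^F||\G(s)^{\circ F}|}\,\tensor*[^*]{}{}R_{\L(s)^{\circ}}^{\G(s)^{\circ}}\circ R_{\tensor*[^h]{}{}\M(s)^{\circ}}^{\G(s)^{\circ}}\circ d_s^{\tensor*[^h]{}{}\M}\circ\ad(h).
\]
Each inner composition $\tensor*[^*]{}{}R_{\L(s)^{\circ}}^{\G(s)^{\circ}}\circ R_{\tensor*[^h]{}{}\M(s)^{\circ}}^{\G(s)^{\circ}}$ lives inside the connected reductive group $\G(s)^{\circ}$ and expands by \eqref{Mackey_conn} into a sum over double cosets $y\in \L(s)^{\circ F}\backslash \mathcal S_{\G(s)^{\circ}}(\L(s)^{\circ},\tensor*[^h]{}{}\M(s)^{\circ})^F/\tensor*[^h]{}{}\M(s)^{\circ F}$, producing a double sum indexed by pairs $(h,y)$.

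For the right-hand side I process each summand analogously. Condition~(i) makes $\L_x$ normal in $\L$ and $\tensor*[^x]{}{}\M_x$ normal in $\tensor*[^x]{}{}\M$, legitimising two applications of Proposition~\ref{Exch}: one converts $d_s^{\L}\circ R_{\L\cap\tensor*[^x]{}{}\M}^{\L}$ into a sum over $k\in\L^F$ with $s\in\tensor*[^k]{}{}(\L\cap\tensor*[^x]{}{}\M)$, the other (after commuting $\ad(k)$ past the restriction) replaces the remaining piece by $\tensor*[^*]{}{}R_{\L(s)^{\circ}\cap\tensor*[^{kx}]{}{}\M(s)^{\circ}}^{\tensor*[^{kx}]{}{}\M(s)^{\circ}}\circ d_s^{\tensor*[^{kx}]{}{}\M}$. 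Here I use the identity $(\L\cap\tensor*[^{kx}]{}{}\M)(s)^{\circ}=\L(s)^{\circ}\cap\tensor*[^{kx}]{}{}\M(s)^{\circ}$, again a consequence of \cite[proposition 1.11]{DiMi}. The right-hand side becomes a double sum indexed by pairs $(x,k)$ of terms of the same shape as those produced on the left.

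The proof concludes by a term-by-term matching via the substitution $z=yh=kx\in\G^F$, under which both double sums are recast as sums over $z$. The main obstacle is the bookkeeping: one has to verify that the geometric condition \emph{``$\L(s)^{\circ}\cap\tensor*[^z]{}{}\M(s)^{\circ}$ contains a maximal torus of $\G(s)^{\circ}$''} (arising on the left) agrees on the support of $d_s^{\tensor*[^z]{}{}\M}$ with \emph{``$\L\cap\tensor*[^z]{}{}\M$ contains a maximal torus of $\Go$''} (arising on the right), and that the scalar prefactors on each side collapse, after reparametrisation, to a common constant. Both reduce to orbit-counting arguments involving the intersections $\L^F\cap\tensor*[^z]{}{}\M^F$ and $\L(s)^{\circ F}\cap\tensor*[^z]{}{}\M(s)^{\circ F}$, together with the fibres of the surjections $(h,y)\mapsto yh$ and $(x,k)\mapsto kx$.
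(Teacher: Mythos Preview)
Your strategy matches the paper's exactly: push $d_s$ through via Proposition~\ref{Exch}, invoke the connected Mackey formula inside $\G(s)^{\circ}$, and then reparametrise both sides to a common index set $\{z\in\mathcal S_{\G}(\L,\M)^F\mid s\in\L\cap\tensor*[^z]{}{}\M\}$. The treatment of the left-hand side and the use of hypothesis~(i) on the right are just as in the paper.

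However, your last paragraph hides the one genuinely non-trivial step behind the phrase ``orbit-counting arguments'', and this is where the proof actually happens. The two reparametrisation maps are of very different difficulty. The map $(x,k)\mapsto kx$ into $\{z\in\mathcal S_{\G}(\L,\M)^F\mid s\in\L\cap\tensor*[^z]{}{}\M\}$ is indeed elementary, with fibres of size $|\L^F|$. But the map $(h,y)\mapsto yh$ requires a geometric argument that is not orbit-counting at all: you must show that if $\L(s)^{\circ}$ and $\tensor*[^{yh}]{}{}\M(s)^{\circ}$ share a maximal torus $\mathbf S$ of $\G(s)^{\circ}$, then $\L$ and $\tensor*[^{yh}]{}{}\M$ share a maximal torus of $\Go$. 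This uses \cite[theorem 1.8(iv)]{DiMi}: the centraliser $C_{\Go}(\mathbf S)$ is a maximal torus of $\Go$, and by rank considerations it coincides with $C_{\Lo}(\mathbf S)$ and $C_{\tensor*[^{yh}]{}{}\Mo}(\mathbf S)$. Conversely, surjectivity (that every $z\in\mathcal S_{\G}(\L,\M)^F$ with $s\in\L\cap\tensor*[^z]{}{}\M$ arises as some $yh$) needs \cite[proposition 1.3]{DiMi} to produce an $s$-stable maximal torus of $\Lo\cap\tensor*[^z]{}{}\Mo$ and then \cite[theorem 1.8(iii)]{DiMi} to descend it to a maximal torus of $\G(s)^{\circ}$ inside $\L(s)^{\circ}\cap\tensor*[^z]{}{}\M(s)^{\circ}$. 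Only after these two verifications do the fibre sizes ($|\G(s)^{\circ F}|$ on the left) become relevant.

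A smaller point: the identity $(\L\cap\tensor*[^{kx}]{}{}\M)(s)^{\circ}=\L(s)^{\circ}\cap\tensor*[^{kx}]{}{}\M(s)^{\circ}$ is not a direct consequence of \cite[proposition 1.11]{DiMi}. One inclusion is obvious; the reverse uses that $\L(s)^{\circ}\cap\tensor*[^{kx}]{}{}\M(s)^{\circ}$ is a Levi subgroup of $\G(s)^{\circ}$ (hence connected) precisely because $kx$ lies in the relevant $\mathcal S$-set, so it sits inside the identity component of $(\L\cap\tensor*[^{kx}]{}{}\M)(s)$.
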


\begin{proof}
We will follow the proof of \cite[lemma 2.3.4.]{Bon}. We begin by exchanging $d_s^{\L}$ with the Lusztig maps on the first term of the formula, using the proposition \ref{Exch}. We obtain the sum
\[
\frac{1}{|\M^F||\G(s)^{\circ F}|}\sum_{\{g\in\G^F|s\in\tensor*[^g]{}{}\M\}}|\tensor*[^g]{}{}\M(s)^{\circ F}|\tensor*[^*]{}{}R_{\L(s)^{\circ}}^{\G(s)^{\circ}}\circ R_{\tensor*[^g]{}{}\M(s)^{\circ}}^{\G(s)^{\circ}}\circ d_s^{\tensor*[^g]{}{}\M}\circ \ad(g).
\]

\begin{lemma}
The Mackey formula is true for the Levi subgroups $\tensor*[^g]{}{}\M(s)^{\circ}$ and $\L(s)^{\circ}$ of $\G(s)^{\circ}$.
\end{lemma}

\begin{proof}
We recall that $\tensor*[^g]{}{}\M(s)^{\circ}$ and $\L(s)^{\circ}$ are $F$-stable Levi subgroups of the connected reductive group $\G(s)^{\circ}$.
Let us go through all conditions in (ii): 
\begin{itemize}
\item If (a) holds, then the centralizers of $s$ in the corresponding parabolic subgroups are also $F$-stable. This means that $\tensor*[^g]{}{}\M(s)^{\circ}$ and $\L(s)^{\circ}$ are split Levi subgroups in $\G(s)^{\circ}$, thus they satisfy \ref{Mackey_conn}, the Mackey formula for connected groups. 
\item If (b) holds and we suppose that $\L$ is the centralizer of a maximal torus of $\G$ and a Borel subgroup containing the torus, then from \cite[theorem 1.8]{DiMi} we have that $\L(s)^{\circ}$ is a maximal torus of $\G(s)^{\circ}$. This fulfills one of the conditions for the formula \ref{Mackey_conn}. 
\item If $q>2$, we apply again the Mackey formula for connected reductive groups. If $\Go$ is not of type $\tensor*[^2]{}{}\operatorname{E}_6$, $\operatorname{E}_7$ or $\operatorname{E}_8$, then neither will $\G(s)^{\circ}=(\G^{\circ}(s))^{\circ}$ be of that type, and we can apply, one last time, the Mackey formula for connected groups. 
\end{itemize}
\end{proof}

Using the previous lemma, we apply the Mackey formula for the centralizers of $s$. We will obtain intersections of the type $ \L(s)^{\circ} \cap \tensor*[^{hg}]{}{}\M(s)^{\circ}$
for $h\in \mathcal S_{\G(s)^{\circ}}(\L(s)^{\circ},\tensor*[^g]{}{}\M(s)^{\circ})^F$. We have that $ \L(s)^{\circ} \cap \tensor*[^{hg}]{}{}\M(s)^{\circ}= (\L \cap \tensor*[^{hg}]{}{}\M)(s)^{\circ}$: First of all, $\L(s) \cap \tensor*[^{hg}]{}{}\M(s)= (\L \cap \tensor*[^{hg}]{}{}\M)(s)$. Then, it is clear that $(\L \cap \tensor*[^{hg}]{}{}\M)(s)^{\circ}$ is contained in both $\L(s)^{\circ} $ and $\tensor*[^{hg}]{}{}\M(s)^{\circ}$. Furthermore, by the definition of $h$, $\L(s)^{\circ} \cap \tensor*[^{hg}]{}{}\M(s)^{\circ}$ is a Levi subgroup of $\G(s)^{\circ}$, as it is the intersection of two Levi subgroups containing a common maximal torus. Therefore, it is a connected group on $(\L \cap \tensor*[^{hg}]{}{}\M)(s)$, and the other inclusion holds.

We obtain, using the alternative form of the Mackey formula: 
\begin{align} \label{dsRLG}
d_s^{\L} \circ \tensor*[^*]{}{}R_{\L}^{\G}\circ R_{\M}^{\G} &= 
\sum_{\{g\in \G^F|s\in\tensor*[^g]{}{}\M\}}
\sum_{h\in \mathcal S_{\G(s)^{\circ}}(\L(s)^{\circ},\tensor*[^g]{}{}\M(s)^{\circ})^F}
\frac{|(\L \cap \tensor*[^{hg}]{}{}\M)(s)^{\circ F}|}{|\M^F| |\G(s)^{\circ F}| |\L(s)^{\circ F}|} \nonumber\\
 & \times R_{(\L \cap \tensor*[^{hg}]{}{}\M)(s)^{\circ}}^{\L(s)^{\circ}}  
 \circ \tensor*[^*]{}{}R_{(\L \cap \tensor*[^{hg}]{}{}\M)(s)^{\circ}}^{\tensor*[^{hg}]{}{}\M(s)^{\circ}}
 \circ d_s^{\tensor*[^{hg}]{}{}\M} \circ \ad (hg).
\end{align}

For the other term, we can use the exchange formula in \ref{Exch}, due to the conditions (i) and (iii) on $\G$. We obtain:
\[
\sum_{x\in \L^F\backslash \mathcal S_{\G}(\L,\M)^F/ \M^F}
d_s^{\L} \circ R_{\L\cap\tensor*[^x]{}{}\M}^{\L} 
\circ \tensor*[^*]{}{}R_{\L\cap\tensor*[^x]{}{}\M}^{\tensor*[^x]{}{}\M} \circ \ad(x) 
\]
\[
= \sum_{x\in \mathcal S_{\G}(\L,\M)^F} \frac{|\L^F \cap \tensor*[^x]{}{}\M^F|}{|\L^F||\M^F|}
d_s^{\L} \circ R_{\L\cap\tensor*[^x]{}{}\M}^{\L} 
\circ \tensor*[^*]{}{}R_{\L\cap\tensor*[^x]{}{}\M}^{\tensor*[^x]{}{}\M} \circ \ad(x)
\]
\[
= \sum_{x\in \mathcal S_{\G}(\L,\M)^F}\frac{1}{|\L^F||\M^F||\L(s)^{\circ F}|}
\sum_{\{l\in\L^F| s\in\L\cap\tensor*[^{lx}]{}{}\M\}}
|(\L\cap\tensor*[^{lx}]{}{}\M)(s)^{\circ F}|
\]
\[
 \times R_{(\L \cap \tensor*[^{lx}]{}{}\M)(s)^{\circ}}^{\L(s)^{\circ}}  
 \circ \tensor*[^*]{}{}R_{(\L \cap \tensor*[^{lx}]{}{}\M)(s)^{\circ}}^{\tensor*[^{lx}]{}{}\M(s)^{\circ}}
 \circ d_s^{\tensor*[^{lx}]{}{}\M} \circ \ad(lx)
\](the exchange with the first map will add a factor $|\L^F \cap \tensor*[^x]{}{}\M^F|$ on the denominator). 
Next, the map
\[\begin{array}{rcl}
\{(l,x)\in \L^F\times \mathcal S_{\G}(\L,\M)^F| s\in\L\cap\tensor*[^{lx}]{}{}M\} & \longrightarrow & \{y\in\mathcal S_{\G}(\L,\M)^F | s\in\L\cap\tensor*[^{y}]{}{}\M\} \\
(l,x) & \longmapsto & lx
\end{array}
\]
is clearly surjective, and the fiber of each element $y\in\mathcal S_{\G}(\L,\M)^F$ is $\{(l,l^{-1}y)|l\in\L^F\}$, hence of cardinal $|\L^F|$. Therefore, the sum above is equal to
\[
\sum_{\{y\in\mathcal S_{\G}(\L,\M)^F | s\in\L\cap\tensor*[^{y}]{}{}\M\}}
\frac{|(\L\cap\tensor*[^{y}]{}{}\M)(s)^{\circ F}|}{|\M^F||\L(s)^{\circ F}|}
  R_{(\L \cap \tensor*[^{y}]{}{}\M)(s)^{\circ}}^{\L(s)^{\circ}}  
 \circ \tensor*[^*]{}{}R_{(\L \cap \tensor*[^{y}]{}{}\M)(s)^{\circ}}^{\tensor*[^{y}]{}{}\M(s)^{\circ}}
 \circ d_s^{\tensor*[^{y}]{}{}\M} \circ \ad (y),
\]

Finally, we repeat the process with the sum \ref{dsRLG}: We consider the map from
$\{(g,h)\in\G^F \times \G(s)^{\circ F} | s\in\tensor*[^{g}]{}{}\M,
 h\in \mathcal S_{\G(s)^{\circ}}(\L(s)^{\circ},\tensor*[^g]{}{}\M(s)^{\circ})\} $ to $\{y\in\mathcal S_{\G}(\L,\M)^F | s\in\L\cap\tensor*[^{y}]{}{}\M\}$ that sends $(g,h)$ to $hg$. It is:
\begin{itemize}
\item Well defined: 
By the definition of $h$, $\L(s)^{\circ}$ and $\tensor*[^{hg}]{}{}\M(s)^{\circ}$ contain a common maximal torus $\mathbf S$ of $\G(s)^{\circ}$. From (iv) of \cite[theorem 1.8.]{DiMi}, the centralizers $C_{\Go}(\mathbf S)$, $C_{\Lo}(\mathbf S)$ and $C_{\tensor*[^{hg}]{}{}\Mo}(\mathbf S)$ are maximal tori of $\Go$, $\Lo$ and $\tensor*[^{hg}]{}{}\Mo$, respectively. $\Lo$ and $\tensor*[^{hg}]{}{}\Mo$ are reductive groups of the same rank than $\Go$, what implies that the three tori are maximal on $\Go$. Furthermore, the last two are contained in $C_{\G^{\circ}}(\mathbf S)$, thus $C_{\Lo}(\mathbf S)= C_{\Go}(\mathbf S) =C_{\tensor*[^{hg}]{}{}\Mo}(\mathbf S)$. Therefore, $C_{\Go}(\mathbf S)\subset \L\cap\tensor*[^{hg}]{}{}\M$ and then $hg\in S_{\G}(\L,\M)^F$. We already had that $s=\tensor*[^h]{}{}s\in\tensor*[^{hg}]{}{}\M$.


\item Surjective: Let $y\in\mathcal S_{\G}(\L,\M)^F$ such that $ s\in\L\cap\tensor*[^{y}]{}{}\M$. We have that $s\in\tensor*[^{y}]{}{}\M$, we just need to verify that $\L(s)^{\circ}\cap \tensor*[^{y}]{}{}\M(s)^{\circ}$ contains a maximal torus of $\G(s)^{\circ}$. We recall that $\L\cap\tensor*[^{y}]{}{}\M$ is a Levi of a certain $\G'$. We restrict ourselves to $\G'$: Since $s$ is semisimple in the reductive group $\L\cap\tensor*[^{y}]{}{}\M$, by \cite[proposition 1.3]{DiMi}, there exists a "torus" $\T'$ of this group containing $s$. 
Its connected component $\To$ is a maximal torus of $\Go$, $\Lo$ and $\tensor*[^{y}]{}{}\Mo$, 
and it is $s$-stable, thus from (iii) of \cite[theorem 1.8.]{DiMi} we get that $(\To(s))^{\circ}$ is a maximal torus of $(\Go(s))^{\circ}$, $(\Lo(s))^{\circ}$ and $(\tensor*[^{y}]{}{}\Mo(s))^{\circ}$. Therefore, $(y,1)$ belongs to the preimage of $y$, since $(\Lo(s))^{\circ}=\L(s)^{\circ}$ and 
$(\tensor*[^{y}]{}{}\Mo(s))^{\circ}=\tensor*[^{y}]{}{}\M(s)^{\circ}$.
%
%

\item The fiber of $y$ is $\{(hy,h^{-1})|h\in \G(s)^{\circ F}\}$ so it is always of cardinal $|\G(s)^{\circ F}|$.
\end{itemize}

So the sum \ref{dsRLG} is equal to the last sum, whence the result.

\end{proof}

Since the equality of the theorem holds for every semisimple element $s$ of $\L^F$, and every element of $\G$ admits a Jordan decomposition, we obtain our main result:

\begin{corollary}[Mackey formula]\label{main_corollary}
Let $\G$ be a reductive algebraic group. Let $\L$ and $\M$ be rational Levi subgroups of $\G$. Let $\mathcal S_{\G}(\L,\M)$ be the set of $x\in\G$ such that $\L\cap\tensor*[^x]{}{}\M$ contains a maximal torus of $\Go$. We suppose that $\G$, $\L$ and $\M$ satisfy the conditions in \ref{Mackey_ds}. Then:
\[
\tensor*[^*]{}{}R_{\L}^{\G}\circ R_{\M}^{\G}
= \sum_{x\in \L^F\backslash \mathcal S_{\G}(\L,\M)^F/ \M^F}
R_{\L\cap\tensor*[^x]{}{}\M}^{\L} 
\circ \tensor*[^*]{}{}R_{\L\cap\tensor*[^x]{}{}\M}^{\tensor*[^x]{}{}\M} \circ \ad(x).
\]
\end{corollary}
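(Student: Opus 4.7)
The plan is to deduce this corollary directly from Theorem \ref{Mackey_ds} via a Jordan decomposition argument on $\L^F$. Theorem \ref{Mackey_ds} already guarantees, for every semisimple $s \in \L^F$, equality of the maps $d_s^{\L} \circ \tensor*[^*]{}{}R_{\L}^{\G} \circ R_{\M}^{\G}$ and $\sum_{x} d_s^{\L} \circ R_{\L \cap \tensor*[^x]{}{}\M}^{\L} \circ \tensor*[^*]{}{}R_{\L \cap \tensor*[^x]{}{}\M}^{\tensor*[^x]{}{}\M} \circ \ad(x)$. It therefore suffices to show that the collection of operators $d_s^{\L}$, as $s$ ranges over semisimple elements of $\L^F$, jointly separates class functions on $\L^F$.

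To establish this separation, I would first observe that condition (iii) transfers from $\G$ to $\L$: since $\Lo = \L \cap \Go$, the quotient $\L / \Lo$ embeds into $\G / \Go$, so it consists of semisimple elements as well. The discussion of \textsection \ref{sec_gen_setting} then applies verbatim to $\L$ and yields that every $l \in \L^F$ admits a Jordan decomposition $l = su$ with $s \in \L^F$ semisimple and $u$ unipotent lying in $\L(s)^{\circ F}$ (all unipotent elements of $\L$ are contained in the identity component of their centralizer). By the very definition of $d_s^{\L}$, we then have $\chi(l) = d_s^{\L}(\chi)(u)$ for any class function $\chi$ on $\L^F$, and so $\chi$ is entirely determined by the family $\{d_s^{\L}(\chi)\}_{s}$.

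Applying this separation to the two class functions on $\L^F$ obtained by evaluating each side of the identity of Theorem \ref{Mackey_ds} on an arbitrary class function $\psi$ on $\M^F$, we conclude that the two sides coincide pointwise on $\L^F$, which is the desired equality of maps. There is no real obstacle here: Theorem \ref{Mackey_ds} does all the heavy lifting, and the corollary is a formal consequence of Jordan decomposition in $\L$ together with the defining property of $d_s^{\L}$.
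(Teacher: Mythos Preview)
Your proposal is correct and follows exactly the paper's own approach: the paper deduces the corollary from Theorem \ref{Mackey_ds} in a single sentence, observing that the equality there holds for every semisimple $s\in\L^F$ and that every element of $\L^F$ admits a Jordan decomposition, so the family $\{d_s^{\L}\}_s$ separates class functions. Your write-up simply unpacks this observation in more detail, including the (correct) verification that condition (iii) passes from $\G$ to $\L$ so that unipotent parts lie in $\L(s)^{\circ F}$.
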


\bibliography{phd_biblio_mackey}

\bibliographystyle{alpha}

\end{document}